\newenvironment{enumeratei}{\begin{enumerate}[label=\textup{(\roman*)}, noitemsep, topsep=1.5mm, labelindent=.8em, leftmargin=*, widest=.]}{\end{enumerate}}
\newenvironment{enumeratenum}{\begin{enumerate}[label=\textup{(\arabic*)}, noitemsep, topsep=1.5mm, labelindent=.8em, leftmargin=*, widest=.]}{\end{enumerate}}
\newenvironment{enumeratealph}{\begin{enumerate}[label=\textup{(\alph*)}, noitemsep, topsep=1.5mm, labelindent=.8em, leftmargin=*, widest=.]}{\end{enumerate}}
\newenvironment{algorithm-hbox}{\hbadness=10000\begin{algorithm}}{\end{algorithm}}
\theoremstyle{plain}
\newtheorem{theorem}{Theorem}
\newtheorem{conjecture}[theorem]{Conjecture}
\newtheorem{lemma}[theorem]{Lemma}
\newtheorem*{claim}{Claim}
\numberwithin{equation}{section}
\newcommand{\ra}{\rightarrow}
\newcommand{\Nat}{\mathbb N}
\newcommand{\NN}{\mathbb N}
\newcommand{\epsi}{\varepsilon}
\renewcommand{\leq}{\leqslant}
\renewcommand{\geq}{\geqslant}
\newcommand{\set}[1]{\{#1\}}
\newcommand{\norm}[1]{{|#1|}}
\DeclareMathOperator{\myFrac}{frac}
\DeclareMathOperator{\PVA}{PCOL}
\DeclareMathOperator{\myRoot}{root}
\DeclareMathOperator{\firstChild}{first-child}
\DeclareMathOperator{\nextChild}{next-child}
\DeclareMathOperator{\depth}{depth}
\DeclareMathSymbol{\desc}{\mathclose}{symbols}{35}
\DeclareMathOperator{\nextV}{\texttt{nextV}}
\DeclareMathOperator{\nextT}{\texttt{next}}
\DeclareMathOperator{\chosen}{chosen}
\DeclareMathOperator{\myPath}{vcolors}
\DeclareMathOperator{\LOG}{Log}
\DeclareMathOperator{\signum}{sgn}
\newcommand{\append}{\cdot}
\newcommand{\Cs}{C^{*}}
\newcommand{\Bs}{B^{*}}
\newcommand{\llen}{\hat{c}}
\begin{document}
\title[Nonrepetitive choice number of trees]{Nonrepetitive choice number of trees}
\author{Jakub Kozik}\thanks{Research of J.\ Kozik and P.\ Micek was supported by the Polish National Science Center within grant 2011/01/D/ST1/04412.}
\address{Theoretical Computer Science Department, Faculty of Mathematics and
Computer Science, Jagiellonian University, 30-387 Krak\'{o}w, Poland}
\email{jkozik@tcs.uj.edu.pl}
\author{Piotr Micek}
\address{Theoretical Computer Science Department, Faculty of Mathematics and
Computer Science, Jagiellonian University, 30-387 Krak\'{o}w, Poland}
\email{piotr.micek@tcs.uj.edu.pl}
\begin{abstract}

A nonrepetitive coloring of a path is a coloring of its vertices such that the sequence of colors along the path does not contain two identical, consecutive blocks. The remarkable construction of Thue asserts that $3$ colors are enough to color nonrepetitively paths of any length. A nonrepetitive coloring of a graph is a coloring of its vertices such that all simple paths are nonrepetitively colored. Assume that each vertex $v$ of a graph $G$ has assigned a set (list) of colors $L_v$. A coloring is chosen from $\set{L_v}_{v\in V(G)}$ if the color of each $v$ belongs to $L_v$. The Thue choice number of $G$, denoted by $\pi_l(G)$, is the minimum $k$ such that for any list assignment $\set{L_v}$ of $G$ with each $\norm{L_v}\geq k$ there is a nonrepetitive coloring of $G$ chosen from $\set{L_v}$. Alon et al.\ (2002) proved that $\pi_l(G)=O(\Delta^2)$ for every graph $G$ with maximum degree at most $\Delta$. We propose an almost linear bound in $\Delta$ for trees, namely for any $\epsi>0$ there is a constant $c$ such that $\pi_l(T)\leq c\Delta^{1+\epsi}$ for every tree $T$ with maximum degree $\Delta$. The only lower bound for trees is given by a recent result of Fiorenzi et al.\ (2011) that for any $\Delta$ there is a tree $T$ such that $\pi_l(T)=\Omega(\frac{\log\Delta}{\log \log \Delta})$. We also show that if one allows repetitions in a coloring but still forbid $3$ identical consecutive blocks of colors on any simple path, then a constant size of the lists allows to color any tree.
\end{abstract}

\keywords{Thue, nonrepetitive sequence, nonrepetitive coloring, choice number}
\maketitle

\section{introduction}

A \emph{repetition} of length $h$ ($h\geqslant 1$) in a sequence is a subsequence of consecutive terms of the form: $x_1\ldots x_h x_1\ldots x_h$. A sequence is \emph{nonrepetitive} if it does not contain a repetition of any length.

In 1906 Thue proved that there exist arbitrarily long nonrepetitive sequences over only $3$ different symbols (see \cite{Ber95,Thu06}). The method discovered by Thue is constructive and uses substitutions over a given set of symbols. Recently a completely different approach to creating long nonrepetitive sequences emerged (see \cite{GKM}). Consider the following naive procedure: generate consecutive
terms of a sequence by choosing symbols at random and every time a repetition occurs, erase the repeated block
and continue. For instance, if the generated sequence is $abcbc$, we must
cancel the last two symbols, which brings us back to $abc$. By a simple counting one can prove that with positive probability the
length of a constructed sequence exceeds any finite bound, provided the
number of symbols is at least $4$. This is slightly weaker than Thue's
result, but the argument seems to be more flexible for adaptations to other settings. This approach leads e.g.\ to a very short proof (see \cite{GKM}) that for every $n\geq1$ and every sequence of sets $L_1,\ldots,L_n$, each of size at least $4$, there is a nonrepetitive sequence $s_1,\ldots,s_n$ where $s_i\in L_i$ (first proved with an enhanced Local Lemma in \cite{GPZ}). The analogous statement for lists of size $3$ remains an exciting open problem. In this paper we make use of the above-mentioned approach to nonrepetitive colorings of trees.

For a given graph $G$  we denote by $V(G)$ the set of vertices of $G$. A coloring function $f:V(G)\to\NN$ is a \emph{nonrepetitive coloring} of $G$ if there is no repetition on the color sequence of any simple path in $G$. The minimum number of colors used in a nonrepetitve coloring of $G$ is called the \emph{Thue number} of $G$ and denoted by $\pi(G)$. The dependence between the Thue number and maximum degree of graphs is already quite well understood.
\begin{theorem}[Alon et al.\ \cite{AGHR02}]\label{thm:Alon}
For any graph $G$ with maximum degree $\Delta$ there is a nonrepetitive coloring of $G$ using at most $16\Delta^2$ colors. Moreover, for every $\Delta>1$ there is a graph with maximum degree $\Delta$ which needs $\Omega\left(\frac{\Delta^2}{\log\Delta}\right)$ colors in any nonrepetitive coloring.
\end{theorem}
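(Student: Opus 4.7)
The approach for the upper bound will be a direct application of the Lovász Local Lemma. I color each vertex of $G$ independently and uniformly at random from a palette of $c=16\Delta^2$ colors. For every simple path $P=v_1 v_2 \ldots v_{2k}$ of even length in $G$, let $A_P$ be the bad event that $P$ is a repetition, i.e.\ the color of $v_i$ equals the color of $v_{k+i}$ for all $1\leq i\leq k$. Then $\Pr[A_P]=c^{-k}$, and $A_P$ is mutually independent of the family $\{A_Q : V(Q)\cap V(P)=\emptyset\}$ since distinct vertices receive independent colors. The goal is to show the bad events can all be avoided with positive probability.

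Next I would bound the dependency degree. The number of simple paths of length $2\ell$ in $G$ passing through a fixed vertex is at most $2\ell \cdot \Delta^{2\ell-1}$ (pick the position of the vertex, then extend on each side greedily). Hence a path $P$ of length $2k$ is connected in the dependency graph to at most $2k \cdot 2\ell \cdot \Delta^{2\ell-1}$ paths of length $2\ell$. With the asymmetric LLL and weights of the form $x_P=(\alpha\Delta^2)^{-k}$ for a suitable constant $\alpha$ (tuned so that the product $\prod_Q (1-x_Q)^{d_Q}$ dominates the term $2k\cdot 2\ell\cdot \Delta^{2\ell-1}\cdot (\alpha\Delta^2)^{-\ell}$ uniformly in $\ell$), one verifies the LLL condition $\Pr[A_P]\leq x_P \prod_{Q\sim P}(1-x_Q)$. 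The constant $16$ in $16\Delta^2$ comes out of optimizing $\alpha$. The main technical step here is the geometric summation over path lengths $\ell$, which converges because $\alpha$ can be taken strictly larger than $1$; the bookkeeping is routine once one commits to the weight scheme.

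For the lower bound, the plan is probabilistic. I would consider a random graph $G\sim G(n,p)$ with $p\approx \Delta/n$, so that (after standard concentration) the maximum degree is $\Theta(\Delta)$ with high probability. Given any proper palette of $c$ colors, one upper-bounds the expected number of coloring--graph pairs in which no repetition of length $2$ (an edge with both endpoints of the same color, and a second such edge sharing an endpoint) occurs, and compares this with the number of possible colorings $c^n$. Choosing $c$ of order $\Delta^2/\log\Delta$ makes the expected count tend to $0$, so with positive probability every $c$-coloring of the sampled $G$ produces a repetition, yielding a graph of maximum degree $\Delta$ with $\pi(G)=\Omega(\Delta^2/\log\Delta)$.

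The principal obstacle on the upper-bound side will be selecting LLL weights that are uniform enough in $k$ to make the product over $\ell$ telescope cleanly; a less careful choice forces a constant much worse than $16$. On the lower-bound side, the delicate step is to confine attention to short repetitions (length $2$ or $4$) while still producing the $1/\log\Delta$ factor, which comes from a union bound over approximately $\Delta$ independent local choices; writing this out requires checking that the relevant short paths in $G(n,p)$ are asymptotically independent in the sense needed for the counting bound.
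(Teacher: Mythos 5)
This theorem is quoted from Alon, Grytczuk, Ha{\l}uszczak and Riordan \cite{AGHR02} and is not proved in the present paper, so the comparison is with the source's argument rather than with anything internal. Your upper bound is essentially that argument: uniformly random colors, one bad event per path on an even number of vertices, and the asymmetric Local Lemma with weights $x_P=(\alpha\Delta^2)^{-k}$; the count of at most $2\ell\,\Delta^{2\ell-1}$ paths on $2\ell$ vertices through a fixed vertex and the convergence of the resulting geometric series for $\alpha>1$ are exactly the right ingredients. Be aware, though, that the computation done as loosely as you describe yields a constant that blows up for small $\Delta$ (the original paper's constant was $2e^{16}$); extracting $16$ uniformly in $\Delta$ takes a noticeably tighter version of the same estimate, so the claim that ``the constant $16$ comes out of optimizing $\alpha$'' is optimistic. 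The lower-bound plan --- $G(n,p)$ with $p\approx\Delta/n$, a first-moment count, and a union bound over all $c^n$ colorings, which is where the $\log\Delta$ enters --- is also the strategy of the source, but your description of the forcing structure is wrong: ``an edge with both endpoints of the same color'' is a repetition of length $1$, and forbidding such edges (even two of them sharing an endpoint) only forces properness, hence $O(\Delta)$ colors. To reach the quadratic threshold you must count four-vertex paths $uvwx$ with $c(u)=c(w)$ and $c(v)=c(x)$, i.e.\ repetitions of length $2$; the probability $c^{-2}$ attached to each such path is what produces $\Delta^2$ in the bound. You would also need the maximum degree to be at most $\Delta$ exactly, not $\Theta(\Delta)$, e.g.\ by deleting high-degree vertices and checking that this does not destroy too many of the counted paths.
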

The Thue number of any tree is at most $4$ (see \cite{AGHR02}). K{\"u}ndgen and Pelsmajer \cite{KP08} proved that $\pi(G)\leq 12$ for all outerplanar $G$, and $\pi(G)\leq 4^k$ for all graphs $G$ with tree-width at most $k$. Probably the most intriguing question in the area concerns planar graphs.
\begin{conjecture}[Grytczuk 2007 \cite{Gry07b}]
There is a constant such that $\pi(G)\leq c$, for all planar graphs $G$.
\end{conjecture}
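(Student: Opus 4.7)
This conjecture remains a central open problem in the area, and a complete proof is beyond what I can sketch. The direct approaches used in the paper break down: the Local Lemma method of Alon et al.\ (Theorem~\ref{thm:Alon}) yields a bound depending on $\Delta$, which is unbounded for planar graphs, and the Kündgen--Pelsmajer bound $\pi(G)\leq 4^k$ is not applicable since planar tree-width can be as large as $\Theta(\sqrt{n})$. Any viable attack must therefore use planarity in a finer way than either sparsity or tree-width alone.

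My plan is to combine a BFS layering with the Kündgen--Pelsmajer theorem. Root $G$ at an arbitrary vertex and let $L_i=\{v:d(v,\text{root})=i\}$. A classical result of Baker says that the union of any $k$ consecutive BFS layers of a planar graph has tree-width at most a linear function of $k$. Fix a constant $k$ and group the layers into disjoint slabs $S_j=L_{jk}\cup\dots\cup L_{(j+1)k-1}$; by Kündgen--Pelsmajer each $G[S_j]$ admits a nonrepetitive coloring with $C$ colors, where $C$ depends only on $k$. This gives a global map $\chi\colon V(G)\to\{1,\dots,C\}$. Augment $\chi$ with a phase label $\phi(v)=\lfloor d(v,\text{root})/k\rfloor\bmod m$ for some small $m$, and use the pair $(\chi(v),\phi(v))$ as the final color. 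A simple path that stays inside one slab inherits nonrepetition from the local coloring; a path that crosses many slab boundaries should be ruled out by the phase label.

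The main obstacle, and the reason the conjecture is open, is the case of a simple path whose two halves cross the same sequence of slabs in the same way: both $\chi$ and a constant-size phase label can fail to distinguish them, because the path can be structured on a medium scale without being confined. A successful proof seems to require a structural lemma asserting that a repetition in a planar graph forces its two halves to be `aligned' in some hierarchical decomposition --- essentially a strong form of planar product structure --- or a clever choice of root producing a layering rigid enough to forbid such alignment. Identifying such a decomposition, or replacing the linear BFS layering by something more rigid, is precisely the step I do not know how to carry out.
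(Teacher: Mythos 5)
This statement is a \emph{conjecture} of Grytczuk, not a theorem of the paper: the authors state it as an open problem and offer no proof, so there is nothing in the paper to compare your argument against. You are right to treat it as open, and right that neither the $O(\Delta^2)$ bound of Alon et al.\ nor the tree-width bound of K\"undgen and Pelsmajer can close it, since planar graphs have unbounded degree and tree-width. Your sketch is therefore not a proof and does not claim to be one; the honest assessment is that it contains exactly the gap you name.

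That gap is genuine and is the crux of the problem. In the Baker-slab scheme, a repetitively colored path can oscillate between two adjacent slabs, or traverse slabs $S_j, S_{j+1}, \ldots$ in its first half and the congruent sequence $S_{j'}, S_{j'+1}, \ldots$ in its second half; the local K\"undgen--Pelsmajer colorings of distinct slabs are independent of one another, so nothing prevents the two halves from receiving identical $\chi$-sequences, and a phase label with constantly many values is itself periodic and cannot break such an alignment (this is the same reason a bounded palette cannot nonrepetitively color $\mathbb{Z}$ by position alone --- one needs a Thue-type sequence, and even that only controls the one-dimensional layer index, not the behaviour within layers). What is missing is a way to make the layered decomposition interact with a nonrepetitive structure on the quotient, which is precisely the ``strong form of planar product structure'' you gesture at in your last paragraph. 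For what it is worth, your instinct there is sound: the conjecture was eventually resolved affirmatively (after this paper) by combining the planar product structure theorem with a nonrepetitive coloring of the path factor, but carrying that out requires the product structure theorem itself, which your sketch does not supply.
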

\noindent Very recently Dujmovi\'{c} et al.\ \cite{DFJW} showed $\pi(G)=O(\log n)$ for all planar $G$ on $n$ vertices.

Now, we turn to the list-version of nonrepetititve colorings of graphs. This is an analog of the classical graph choosability introduced by Vizing \cite{Viz76} and independently by Erd\"{o}s, Rubin and Taylor \cite{ERT80}. Given a graph $G$ suppose that each $v\in V(G)$ has a preassigned set of colors $L_v$. We call $\set{L_v}_{v\in V(G)}$ a \emph{list assignment} of $G$, or just \emph{lists} of $G$. A coloring $f$ is \emph{chosen from} $\set{L_v}$ if $f(v)\in L_v$ for all $v\in V(G)$. The \emph{Thue choice number} of $G$, denoted by $\pi_l(G)$, is the minimum $k$ such that for any list assignment $\set{L_v}$ of $G$ with each $\norm{L_v}\geq k$ there is a nonrepetitive coloring of $G$ chosen from $\set{L_v}$. The upper bound from Theorem \ref{thm:Alon} works also in the list-setting, i.e., $\pi_l(G)\leq 16\Delta^2$ for all $G$ with maximum degree $\Delta$. As we mentioned $\pi_l(P_n)\leq 4$ for all paths $P_n$ and the problem whether $3$ or $4$ is the right bound remains open. The first significant difference between the Thue number and the Thue choice number has been proved recently for trees.
\begin{theorem}[Fiorenzi et al.\ \cite{FOOZ11}]\label{thm:Thue-choice-for-trees}
For any constant $c$ there is a tree $T$ such that $\pi_l(T)\geq c$.
\end{theorem}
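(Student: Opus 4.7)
The plan is to show by induction that for every $c\geq 2$ there is a tree $T_c$ with a list assignment $\{L_v\}$ of size exactly $c$ at every vertex that admits no nonrepetitive choice coloring; hence $\pi_l(T_c)\geq c+1$, and taking $c$ large proves the theorem. For the base case $c=2$, take $T_2=P_4$ with $L_v=\{1,2\}$ everywhere: the only proper colorings are the alternations $1212$ and $2121$, both period-$2$ repetitions.

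The inductive step rests on a simple gadget. Let $w$ be a vertex with children $w_1,\ldots,w_k$. If some grandchild $g_i$ of $w$ (a child of $w_i$) satisfies $f(g_i)=f(w)$ and two indices $i\ne j$ have $f(w_i)=f(w_j)$, then the path $g_i,w_i,w,w_j$ carries colors $f(w),f(w_i),f(w),f(w_i)$, a period-$2$ repetition. So it suffices to force, for every possible value of $f(w)$, many grandchildren with color equal to $f(w)$; pigeonhole on the $\leq c$ possible values of $f(w_i)$ then finishes.

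To build $T_c$ from $T_{c-1}$, give a root $w$ list $L_w$ of size $c$, and for each $\gamma\in L_w$ attach a group $S_\gamma$ of $c+1$ children. Under each $w_i\in S_\gamma$ place a distinguished child $g_i$ with $\gamma\in L_{g_i}$, and for every other color $\beta\in L_{g_i}\setminus\{\gamma\}$ attach, as a pendant subtree below $g_i$, an augmented copy $B_\beta$ of $T_{c-1}$---its lists coincide with those of $T_{c-1}$ except that the root's list is enlarged by $\beta$ to size $c$. If $f(g_i)=\beta$, adjacency forces the root of $B_\beta$ to avoid $\beta$, its list collapses back to the original size-$(c{-}1)$ bad list, and by induction $B_\beta$ cannot be nonrepetitively colored. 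Hence $f(g_i)=\gamma$ is forced. When the colorer then sets $f(w)=\gamma$, every $w_i\in S_\gamma$ satisfies $f(g_i)=\gamma=f(w)$; since $|S_\gamma|=c+1>|L_{w_i}|$ pigeonhole gives $i\ne j$ with $f(w_i)=f(w_j)$, and the gadget produces the repetition.

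The main technical obstacle is confirming that attaching $B_\beta$ as a pendant subtree preserves the inductive hypothesis inside it. Because the only edge connecting $B_\beta$ to the rest of $T_c$ is incident to its root, every simple path lying entirely inside $B_\beta$ is also a simple path of the standalone $T_{c-1}$, so the "no nonrepetitive coloring" conclusion transfers cleanly; the subtle point is that one must rule out the possibility that cross-subtree paths (entering $B_\beta$ from outside via its root) introduce fresh freedom to resolve the repetition forced by the induction. Making this bookkeeping precise across all the attached copies---and verifying that every vertex of $T_c$ carries a list of size exactly $c$---is the main labor of the proof.
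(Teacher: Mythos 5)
The paper does not prove this theorem itself (it is quoted from Fiorenzi et al.), so I will assess your construction on its own terms. The gadget logic is sound as far as it goes: the path $g_i,w_i,w,w_j$ argument is correct, the forcing of $f(g_i)=\gamma$ via the copies $B_\beta$ is valid (a repetition inside $B_\beta$ is a repetition in $T_c$, and extra cross-subtree paths can only add constraints, never remove them, so the worry you raise about ``fresh freedom'' is a non-issue), and the base case $P_4$ with lists $\set{1,2}$ is fine. You do need to stipulate that all $w_i$ in a group $S_\gamma$ share the same list for the pigeonhole step, but that is a trivial fix.

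The genuine gap is the point you defer as ``the main labor'': every internal (non-root) vertex of each copy $B_\beta$ carries a list of size $c-1$, not $c$. To conclude $\pi_l(T_c)\geq c+1$ you must exhibit a list assignment in which \emph{every} list has size at least $c$; your assignment does not. And this cannot be repaired by bookkeeping: if you pad the internal lists of $B_\beta$ up to size $c$, the inductive hypothesis — which asserts only that $T_{c-1}$ fails for its specific size-$(c-1)$ lists — no longer applies, and $T_{c-1}$ with size-$c$ lists may well be colorable. This obstruction is the actual mathematical content of the theorem. A sanity check confirms something must be wrong: your $T_c$ would have maximum degree $O(c^2)$, yielding trees with $\pi_l(T)=\Omega(\sqrt{\Delta})$, vastly exceeding the known lower bound $\Omega(\log\Delta/\log\log\Delta)$ stated in the introduction; the true construction of Fiorenzi et al.\ must (and does) blow up the degree much more aggressively at each level of the recursion precisely to cope with the growing list sizes.
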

\noindent In fact one can extract from \cite{FOOZ11} that for any $\Delta>1$ there is a tree $T$ with $\pi_l(T)=\Omega(\frac{\log\Delta}{\log \log \Delta})$. We propose two results complementary to Theorem \ref{thm:Thue-choice-for-trees}. First is an improved upper bound for the Thue choice number of trees.
\begin{theorem}\label{thm:1+epsi}
For every $\epsi>0$ there is a constant $c$ such that $\pi_l(T)\leq c\Delta^{1+\epsi}$ for all trees $T$ with maximum degree $\Delta$.
\end{theorem}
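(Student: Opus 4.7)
The plan is to adapt the entropy compression method of \cite{GKM} to trees, leveraging the constrained ``up then down'' structure of simple paths. Fix an arbitrary root of $T$ and order its vertices by breadth-first search. The algorithm builds a partial coloring by iterating: take an uncolored vertex $v$ (chosen canonically), assign it a uniformly random color from $L_v$, and if this creates a repetition on some simple path ending at $v$, erase the colors on the ``second half'' of the largest such repetition and continue. The goal is to show that with $\norm{L_v}\geq c\Delta^{1+\epsi}$ the process terminates in expectation, producing a nonrepetitive coloring chosen from the lists.

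Termination is proved via an entropy compression argument: the $\log\norm{L_v}$ random bits consumed per step must be recoverable from the algorithm's final state together with a short log of failures. A failure with repetition length $2h$ is logged by the shape of the associated simple path. Thanks to the tree structure, this shape is determined by the number of ascending steps $a$ from $v$ to the LCA---the ascending chain is unique and needs no bits---together with the descending portion of $2h-a$ steps into a distinct subtree, which costs at most $\log\Delta$ bits per step. Each failure therefore encodes in $O(\log h + (2h-a)\log\Delta)$ bits, sharpening the generic $O(h\log\Delta)$ bound that underlies the $O(\Delta^{2})$ estimate of \cite{AGHR02}.

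To push the list size from $O(\Delta^2)$ down to $c\Delta^{1+\epsi}$, I would combine the encoding with a product list construction. After a standard reduction, split each $L_v$ into a Cartesian product $L_v^{(1)}\times\cdots\times L_v^{(k)}$ with $k=\lceil 1/\epsi\rceil$ sub-lists of size $O(\Delta^{(1+\epsi)/k})$ each, and assign the $i$-th coordinate the task of suppressing repetitions whose length lies in an exponentially growing range. The product of the sub-list sizes gives the required $c\Delta^{1+\epsi}$ bound.

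The main obstacle is the entropy compression analysis itself, in particular the treatment of \emph{bent paths}, where the LCA is strictly interior to the repetition: the descending portion of such a path still carries the full $\log\Delta$ per step, so the tree savings are not automatic and require careful control of the joint distribution of the repetition length and the position of the LCA. A secondary challenge is making the product coordinates cooperate so that each length range is suppressed by the intended coordinate without interference between scales; this will likely require an explicit invariant on the partial colorings maintained across scales throughout the algorithm's execution.
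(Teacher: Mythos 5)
You have correctly located the crux --- the ``bent'' repetitions whose tip (the LCA of the path) sits far from the midpoint --- but your proposal leaves exactly this point unresolved, and the remedy you sketch does not address it. Concretely, write the offending path as $(v_{2l},\ldots,v_1)$ with $v_{2l}=v$ the vertex just colored, $f(v_i)=f(v_{l+i})$, and tip $v_k$. The algorithm may only erase vertices strictly below the tip on the side of $v$: the other leg $v_{k-1},\ldots,v_1$ lies in a subtree completed earlier in the traversal, and erasing there destroys the prefix structure of the partial coloring (and the progress measure). So when $k>l$ the gain is only $2l-k$ erased vertices --- as few as one when $k$ is close to $2l$ --- while the log must still record the $k-1$ descending steps at $\log\Delta$ bits each. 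Splitting the lists into coordinates indexed by repetition length does not change this ratio: at every scale a descending-heavy bent repetition costs far more to encode than the entropy it releases, so the multi-scale product construction buys nothing against the actual obstruction, which concerns the \emph{position of the tip}, not the length of the repetition.

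The paper's resolution is to strengthen the invariant rather than restructure the lists: the algorithm simultaneously maintains that all \emph{vertical} paths are $x^{1+\delta}$-free, where $1+\epsi=\frac{1+\delta}{1-\delta}$; the same retraction mechanism as in Lemma \ref{lem:ver-free} shows this extra condition costs only a constant factor in the list size. If an $x^2$-repetition with base length $l$ and tip position $k$ arises under this invariant, then each leg of the path is vertical and carries a color sequence of period $l$, which forces $\norm{l-k}\leq \delta l$ and $l-m\leq\delta l$ for the number $m$ of erasable vertices. Hence the tip is pinned near the midpoint, $m\geq(1-\delta)l$, and the descending leg has length $k\leq(1+\delta)l\leq(1+\epsi)m$; only then does the $\log\Delta$ cost per descending step amortize to $(1+\epsi)\log\Delta$ per erased vertex, which lists of size $c\Delta^{1+\epsi}$ can pay for. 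Without some mechanism of this kind that controls where the tip can sit relative to the repetition, your encoding argument cannot close.
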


A sequence is \emph{of the form} $x^r$ for real $r\geq1$ if it can be divided into $\lceil r\rceil$ blocks where all the blocks but the last are the same, say $x_1\ldots x_n$ for some $n\geq1$, and the last block is the prefix of $x_1\ldots x_n$ of size $\lceil\myFrac(r)\cdot n\rceil$, where $\myFrac(r)$ is the fractional part of $r$. The sequence $x_1\ldots x_n$ repeated in those blocks is also called \textit{the base} of the given sequence. For example any repetition is a sequence of the form $x^2$ and $abcdabcdab$ is of the form $x^{2.5}$ with the base $abcd$. A coloring of a graph $G$ is $x^r$-free for real $r>1$ if there is no sequence of the form $x^r$ among the color sequences of simple paths in $G$. Thus, an $x^2$-free coloring is simply a nonrepetitive coloring while an $x^3$-free coloring satisfies a weaker condition, in particular it allows a coloring to have a repetitions. A consequence of our second result is that for any tree $T$ and lists $\set{L_v}$ each of size $8$ there is an $x^3$-free coloring of $T$ chosen from $\set{L_v}$. This explains somehow the tightness of Theorem \ref{thm:Thue-choice-for-trees}.

\begin{theorem}\label{thm:2+epsi}
For every $\epsi>0$ there is a constant $c$ such that for every tree $T$ and lists $\set{L_v}_{v\in V(T)}$ each of size $c$ there is $x^{2+\epsi}$-free coloring of $T$ chosen from $\set{L_v}$.
\end{theorem}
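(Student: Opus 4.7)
The plan is to adapt the entropy compression / random-coloring approach of Grytczuk, Kozik, and Micek~\cite{GKM} to trees, exploiting the fact that the $x^{2+\epsi}$-free condition is strictly weaker than nonrepetitive. Root $T$ arbitrarily and fix a DFS preorder on its vertices. A randomized algorithm maintains a coloring of an initial segment of this order: at each step, take the next uncolored vertex $v$, draw a color from $L_v$ uniformly at random, and check whether some simple path ending at $v$ now carries a pattern of the form $y^{2+\epsi}$. If such a path is found, identify a canonical offending path $P$ with base length $b$, and erase the last $(1+\epsi)b$ colors along $P$, keeping one full base block intact. Continue the DFS from the reduced configuration. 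The goal is to show that for some constant $c=c(\epsi)$, lists of size $c$ suffice for the algorithm to terminate almost surely and in expected linear time.

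The analysis uses a Moser--Tardos-style encoding. Given a run of $T$ attempted colorings, one records, for each bad event, the two endpoints of the identified path $P$ in the DFS order together with the base length $b$; this takes $O(\log |V(T)|)$ bits per event. The key invariant is that the erased $(1+\epsi)b$ colors can be reconstructed from the preserved base block still present in the coloring, so the log plus the currently colored portion deterministically recovers the full history of random draws. If $B$ is the number of bad events with base lengths $b_1,\ldots,b_B$, the bookkeeping gives $T = |V(T)| + \sum_i (1+\epsi)b_i \geq |V(T)| + (1+\epsi)B$, while the encoding has total length at most $|V(T)|\log_2 c + O(B\log |V(T)|)$. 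Choosing $c$ so that $(1+\epsi)\log_2 c$ comfortably dominates the per-event logarithmic overhead, a union bound over logs yields geometric decay of the probability that the algorithm survives $T$ steps.

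The main obstacle compared with the sequence version is the richer structure of simple paths in a tree: a violating path can bend at an LCA rather than sit as a prefix of the colored region. Care is needed to (i) choose the canonical bad path so that the log is injective and the inverse reconstruction is unambiguous, (ii) handle the fact that the $(1+\epsi)b$ erased vertices of $P$ need not form a contiguous suffix of the DFS traversal, so one must either reshuffle the DFS state or erase a few additional vertices in a controlled way to keep the algorithm well-defined, and (iii) bound the number of candidate paths through $v$ by a quantity polynomial in $|V(T)|$ but \emph{independent of $\Delta$}, which is possible because a path is specified by its two endpoints. This last observation is exactly what makes $c$ an absolute constant depending only on $\epsi$; the delicate bookkeeping in (i) and (ii), and precisely quantifying the entropy deficit, is the heart of the argument.
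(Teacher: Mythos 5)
Your overall paradigm (a recolor-and-erase procedure analyzed by an injective encoding of the random seed) is the same as the paper's, but the accounting in your encoding does not close, and the place where it fails is exactly the point you defer to ``delicate bookkeeping.'' You charge $O(\log|V(T)|)$ bits per bad event to record the endpoints of the offending path. A bad event with base length $b=1$ erases only about two colors, so the number $B$ of bad events can be a constant fraction of the number of steps $T$; the compression inequality then reads $T\log_2 c > |V(T)|\log_2 c + \Omega(B\log|V(T)|)$, which forces $\log_2 c = \Omega(\log|V(T)|)$, i.e.\ $c$ polynomial in $|V(T)|$ rather than a constant. Your remark that specifying a path by its two endpoints is ``exactly what makes $c$ an absolute constant'' is backwards: a per-event cost of $\Theta(\log|V(T)|)$ bits is precisely what a constant $c$ cannot absorb. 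Your obstacle (ii) is also genuinely unresolved --- once you erase colors that are not a suffix of the traversal you create colored vertices with uncolored ancestors, and neither the algorithm's state nor the reconstruction is well defined as described.

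The paper escapes both problems with one observation that your proposal is missing: an $x^{2+\epsi}$ block on an arbitrary simple path bends at most once (at the vertex of minimum depth), so the arm containing at least half of its vertices is a \emph{vertical} path carrying a block of the form $x^{1+\epsi/2}$. It therefore suffices to produce a vertically $x^{1+\epsi/2}$-free coloring (Lemma~\ref{lem:ver-free}), and for vertical violations everything you are struggling with disappears: the offending block is always a suffix of the root-to-current-vertex path, so its \emph{position} is recorded for free by the sequence of depths of the current vertex (the supporting walk, $2$ bits per step); the erasure is always of a vertex together with its whole subtree, so the colored region stays a preorder prefix; and the only extra per-step information needed is the base length $l$ given the erased length $m=\lceil l\epsi/2\rceil$, which is one of $\lceil 2/\epsi\rceil$ values --- a constant number of bits. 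This yields $c^M \leq o(4^M)\cdot\lceil 2/\epsi\rceil^M\cdot(\text{additive term})$ and hence a constant $c=4\lceil 2/\epsi\rceil$. Without some such reduction to canonically positioned violations, the endpoint-recording scheme you propose cannot give a constant list size.
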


\section{Proofs}

In both proofs given a tree $T$ we are going to fix an arbitrary vertex for a \emph{root} and denote it by $\myRoot(T)$. For $u,v\in V(T)$ we say that $u$ is a \emph{descendant} of $v$ if the unique simple path from $u$ to $\myRoot(T)$ contains $v$. The set of all descendants of $v$, including $v$, is denoted by $v\desc$. The $\depth(v)$ is the number of vertices on a simple path from $v$ to $\myRoot(T)$. A vertex $u$ is a \emph{child} of $v$ if $u$ is a descendant of $v$ and they are adjacent in $T$. We also pick an arbitrary planar embedding of $T$. This means we fix an ordering of children of every vertex in $T$. 
If $v$ has a child, the first child of $v$ in a determined order is $\firstChild(v)$. 
If $u$ is a child of $v$, but not the last child, then $\nextChild(v,u)$ is the child of $v$ that is next to $u$.

A \emph{vertical} path in a rooted tree is a simple path whose first vertex is a descendant of the last or vice versa. A coloring of a rooted tree $T$ is \emph{vertically $x^r$-free} for real $r>1$ if there is no sequence of the form $x^r$ among the color sequences of vertical paths in $T$.

For any planar embedding of a given rooted tree $T$ and list assignment $\set{L_v}_{v\in V(T)}$, a pair $(f,u)$ is a \emph{partial coloring} if $u\in V(T)$ and $f$ is a partial function from $V(T)$ to $\NN$ defined only for the vertices of $T$ up to $u$ in the preorder traversal of $T$ and $f(v)\in L(v)$, whenever $f(v)$ defined. The set of all partial colorings of a tree $T$ with fixed $\set{L_v}_{v\in V(T)}$ is denoted by $\PVA$.

Following usual convention we define $[n]$ to be $\{1,\ldots,n\}$. For a set of integers $A$ we use $A^+$ to denote the set of finite sequences over $A$ of length at least 1. For $s\in A^+$ and $n \in \NN$ we write $s \append n$ to denote the sequence $s$ with appended element $n$. For a sequence $s= (s_1, \ldots, s_n)$ we put $s_{1..i}= (s_1, \ldots, s_i)$.

Consider a coloring of a rooted tree with an $x^{2+\epsi}$-block on some simple path. Clearly, at least half of the vertices of this path forms a vertical path whose color sequence is of the form $x^{1+\epsi/2}$. Thus, Theorem \ref{thm:2+epsi} is an immediate consequence of the following lemma.
\begin{lemma}\label{lem:ver-free}
For every $\epsi>0$ there is a constant $c=4\cdot\lceil\frac{1}{\epsi}\rceil$ such that every rooted tree is vertically $x^{1+\epsi}$-free colorable from any lists of size $c$.
\end{lemma}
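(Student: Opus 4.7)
The plan is to adapt the \emph{naive random procedure} (entropy compression) of Grytczuk--Kozik--Micek from sequences to embedded rooted trees. Fixing $c = 4\lceil 1/\epsi\rceil$ and any list assignment with lists of this size, I would run the following randomized algorithm on $T$ under an arbitrary fixed planar embedding. Maintain a cursor that walks through the preorder positions of $T$. At each step: if the cursor's vertex is already colored, simply move the cursor one position forward (a \emph{skip} step, using no randomness). Otherwise, pick a color uniformly at random from the current vertex's list and assign it; if no suffix of the color sequence of the vertical path from $\myRoot(T)$ to the cursor is of the form $x^{1+\epsi}$, move the cursor one position forward (an \emph{advance} step). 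If such a bad suffix does exist, take the smallest base length $n$ among them and set $m = \lceil\epsi n\rceil$; then erase the colors of the cursor's vertex $v$ together with its $m-1$ nearest proper ancestors (the $m$ deepest vertices of the bad pattern, all lying on the vertical root-to-$v$ path), and move the cursor to the shallowest erased vertex (a \emph{backtrack} step). Stop when the cursor moves past the last preorder position.

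The technical heart of the argument will be the invariant that, throughout the run and for every vertex $u$, the colored children of $u$ form a prefix of $u$'s children in the fixed embedding order. This is straightforward to verify: initially every prefix is empty; an advance extends one parent's prefix by one because in preorder the cursor enters children in order; a backtrack erases a vertical path, and for each ancestor along that path removes exactly the last colored child while leaving its cousins alone. The crucial consequence is that, in the state immediately after a backtrack with parameters $(n,m)$, the old cursor $v$ can be uniquely recovered: descend from the new cursor $m - 1$ times along ``first uncolored child'', each step forced by the prefix property to land on the next vertex of the erased vertical path. This is what removes an otherwise unaffordable $\log\Delta$-per-level cost in the log and makes the bound on $c$ independent of the maximum degree $\Delta$.

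Record each step as $S$ (skip), $A$ (advance), or $B_n$ (backtrack with base $n$). Because the cursor moves deterministically by $+1$ for $S$ and $A$ steps, the backward walk for these cases just decrements the cursor, and for $A$ additionally reads off the random choice as the current color at the new cursor position (and uncolors it). For a $B_n$ step, compute $m = \lceil\epsi n\rceil$, read the base $(x_1,\ldots,x_n)$ as the $n$ colored ancestors immediately above the cursor, locate $v$ by the first-uncolored-child descent, record the random choice as $x_m$, and restore the erased colors $x_1,\ldots,x_m$ from the base. Thus (final partial coloring, log) injectively determines the full random tape.

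The last step is the counting. Writing $N$ for the number of random-using ($A$ or $B_n$) steps, there are $c^N$ random tapes; the partial colorings contribute at most $(c+1)^{|V(T)|}$ up to polynomial factors; and the $A$/$B_n$ subsequence of the log is a lattice path with $+1$ and $-m_t$ steps, staying nonnegative and ending at the number of colored vertices. A Motzkin-style enumeration that exploits the $1/\epsi$-slack between $m_t$ and $n_t$ -- for each value of $m_t$ there are only $\lceil 1/\epsi\rceil$ possible values of $n_t$ -- should bound the count of length-$N$ logs by roughly $(2\lceil 1/\epsi\rceil)^N$, modulo lower-order factors accounting for the $S$-positions. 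With $c = 4\lceil 1/\epsi\rceil$, the product of state count and log count then falls strictly below $c^N$ for $N$ larger than a constant times $|V(T)|$, so the algorithm terminates with positive probability and outputs the desired coloring. The main obstacle will be this log-counting step: the invariant eliminates any structural dependence on $T$ or $\Delta$, but squeezing out the \emph{linear} (rather than exponential) dependence of $c$ on $1/\epsi$ requires cashing in precisely the $\epsi$-slack in the definition of $x^{1+\epsi}$.
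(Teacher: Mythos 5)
Your overall framework (entropy compression: a log consisting of a depth walk plus a $\lceil 1/\epsi\rceil$-valued annotation recovering the base length $n$ from the erased length $m$, counted against $c^N$ random tapes) is exactly the paper's, and your counting targets are the right ones. But the algorithm you run has a genuine correctness gap. Your backtrack step erases only the $m$ vertices of the vertical path itself, while in a preorder traversal each erased ancestor $w$ typically has earlier children whose entire subtrees were already colored \emph{under the old colors of $w$ and its erased descendants}. After the topmost erased vertex is recolored and the cursor advances, your skip rule walks through those stale subtrees without ever re-checking them, so a vertical path descending through a recolored ancestor into such a subtree can carry an undetected $x^{1+\epsi}$ pattern in the final output. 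The paper avoids this by erasing \emph{all} of $u\desc$ for the topmost erased vertex $u$, so nothing stale survives a backtrack.

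The fix is not a local patch, because erasing whole subtrees destroys your entropy accounting: in your scheme every colored vertex cost one random choice, and the colors of the collaterally erased subtrees are not recoverable from the $x^{1+\epsi}$ base the way the $m$ path colors are. The paper resolves this by also replacing your vertex-by-vertex advance: at a positive step it \emph{deterministically} extends the coloring onto entire child subtrees as long as a vertically $x^{1+\epsi}$-free extension exists, and spends a list index only on the first child whose subtree admits no such extension. Those deterministically extended subtrees carry no entropy, so erasing them later is free; and the resulting invariant (the current partial coloring has no valid extension onto the current vertex's subtree) is what keeps the process running for every seed, which is the engine of the contradiction. Relatedly, your "first-uncolored-child descent'' machinery is solving a problem this lemma does not have: since only vertical paths matter here, the paper's log records just the color sequence on the root-to-cursor path and never needs to locate vertices in the tree (that difficulty, and the $\log\Delta$ cost, only appear in the proof of Theorem \ref{thm:1+epsi}, where the $C^*$ annotation handles it).
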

\begin{proof}
For a given $\epsi>0$ put $c=4\cdot\lceil\frac{1}{\epsi}\rceil$. Let $T$ be a rooted tree and $\set{L_v}_{v\in V(T)}$ be the list assignment with each $\norm{L(v)}=c$. In order to get a contradiction, suppose that there is no vertically $x^{1+\epsi}$-free coloring of $T$ chosen from $\set{L_v}$. Fix an arbitrary planar embedding of $T$.

We propose a very naive procedure struggling to build a proper coloring of $T$ from $\set{L_v}$. The procedure maintains $(f,v)$, a partial coloring of $T$ from $\set{L_v}$, with no color sequence of the form $x^{1+\epsi}$ on any vertical paths other than paths going upwards from $v$. To start the procedure we just pick a color for $\myRoot(T)$ from $L(\myRoot(T))$ and all other vertices are uncolored. Every consecutive step of the procedure tries to correct and/or extend the current partial coloring. This is encapsulated by the call of $\nextV((f,v),n)$ function (see Algorithm \ref{alg-next}), where $(f,v)$ is the current partial coloring and $n$ is the hint for the next decision to be made. The call of $\nextV$ checks first whether $(f,v)$ is vertically $x^{1+\epsi}$-free. If not then the colors from vertices in the repeated $\epsi$-part of $x^{1+\epsi}$ occurrence starting from $v$ are erased (as well as colors of all descendants of erased vertices) and the color for the top-most vertex with erased color is set again to be the $n$-th color from its list. If $(f,v)$ is vertically $x^{1+\epsi}$-free, $\nextV((f,v),n)$ tries to extend the partial coloring $(f,v)$ onto the consecutive subtrees of $v$. 
We will keep an invariant that any extension of an input partial coloring $(f,v)$ onto all descendants of $v$ contains a vertical $x^{1+\epsi}$-block. 
We will extend $(f,v)$ onto $u\desc$ for $u$ being consecutive childs of $v$ and if $u$ is the first child of $v$ whose subtree cannot be colored in this way then $\nextV$ sets the color of $u$ to be the $n$-th color from $L(u)$.

\begin{algorithm-hbox}[!ht]
\caption{$\nextV((f,v),n)$}\label{alg-next}
\uIf{\textup{$x^{1+\epsi}$ occurs in $(f,v)$ starting from $v$ on the way to $\myRoot(T)$}}{
 $l=$ the length of the base of $x^{1+\epsi}$ sequence  \label{alg:negative-step-start}\;
 $m  =\lceil l  \cdot \epsi\rceil$\;
 $(v_{l+m},\ldots,v_1) =$ the path starting from $v_{l+m}=v$ going upwards in $T$\;\quad with $f(v_i)=f(v_{l+i})$ for $1\leq i\leq m$\;
 $u\gets v_{l+1}$\;
 erase all values of f in $u\desc$\label{alg:negative-step-end}
}
\uElse{
$u=\firstChild(v)$\label{alg:positive-step-start}\;
\While{\textup{$f$ has a vertically $x^{1+\epsi}$-free extension onto $u\desc$}}{extend $f$ onto $u\desc$ in vertically $x^{1+\epsi}$-free manner\;
$u=\nextChild(v,u)$\label{alg:positive-step-end}\;
}
}
extend $f$ with $\set{u\ra \alpha}$, where $\alpha$ is the $n$-th element of $L(u)$\label{alg:set-a-color}\;
\Return $(f,u)$\;
\end{algorithm-hbox}	

The partial function $\nextV:\PVA\times [c]\to\PVA$ is defined by Algorithm \ref{alg-next}. Note that $\nextV((f,v),n)$ is well-defined for partial colorings $(f,v)$ with
\begin{enumeratei}
\item no color sequence of the form $x^{1+\epsi}$ on a vertical path other than paths going upwards from $v$,\label{item:no-vertical-path} and
\item no $x^{1+\epsi}$-free extension of $(f,v)$ onto $v\desc$.\label{item:no-extension}
\end{enumeratei}
Moreover, if $(f',u)=\nextV((f,v),n)$ then this new partial coloring also satisfies \ref{item:no-vertical-path} and \ref{item:no-extension}. This allows us to iterate the calls of $\nextV$. Note also that vertex $u$ is determined only by $(f,v)$, i.e.\ the first argument of $\nextV$, while $f'(u)$ is simply the $n$-th color in $L(u)$.

Now, we define recursively a function $h:[c]^+\to\PVA$ which captures the idea of our naive procedure trying to color $T$ from $\set{L_v}$. For $s\in [c]^+$, $1\leq n\leq c$ and $\alpha$ being the $n$-th color in $L(\myRoot(T))$ put
\begin{align*}
h(n)&=(\set{\myRoot(T) \rightarrow \alpha},\myRoot(T)),\\
h(s \append n)&=\nextV(h(s),n).
\end{align*}
First of all note that $h(s)$ is well-defined for all $s\in [c]^+$. Indeed, $h(s)$ is explicitly constructed for all $s$ of length $1$ and it trivially satisfies \ref{item:no-vertical-path}, while \ref{item:no-extension} holds as we supposed that there is no vertically $x^{1+\epsi}$-free coloring of $T$ from $\set{L_v}$. Now $h(s \append n)$ is well-defined as $\nextV$ is well-defined for partial colorings satisfying \ref{item:no-vertical-path}-\ref{item:no-extension} and a new partial coloring also satisfies \ref{item:no-vertical-path}-\ref{item:no-extension}.

It is convenient to see $s\in [c]^+$ as a seed driving to a sequence of partial colorings of $T$: $h(s_{1}),h(s_{1..2}),h(s_{1..3}),\ldots,h(s)$. Now, we aim to get a concise description of this sequence. Let $(f_i,v_i)=h(s_{1..i})$ for $1\leq i \leq \norm{s}$. We define $\chosen(s)=(f_1(v_1),\ldots,f_{\norm{s}}(v_{\norm{s}}))$. In other words, $\chosen(s)$ is a sequence of colors set by instruction \ref{alg:set-a-color} of Algorithm \ref{alg-next} in consecutive calls of $\nextV$ on a way to build $h(s)$. 
\begin{claim}
The function $\chosen$ is injective.
\end{claim}
\begin{proof}[Proof of the Claim]
Note that the length of $\chosen(s)$ is equal the length of $s$. To get a contradiction let $s \neq s'$ be the shortest sequences for which $\chosen(s)=\chosen(s')$. Let $n=\norm{s}=\norm{s'}$. By minimality of $s,s'$ we have $s_{1..(n-1)}= s'_{1..(n-1)}$. The first $n-1$ values of $\chosen(s)$ depend only on $s_{1..(n-1)}$, therefore they are the same for both sequences. Moreover, the last values of $\chosen(s)$ and $\chosen(s')$ are picked from the same list. By the construction of the procedure, the list is determined by $h(s_{1..(n-1)})= h(s'_{1..(n-1)})$ or it is just $L(\myRoot(t))$ in case when  $n=1$. Since $s\neq s'$ they must differ on the last coordinate. It means that indices of the last colors in $\chosen(s)$ and $\chosen(s')$ on the list are different, and hence the colors are different.
\end{proof}

Let $s\in [c]^+$, $(f_i,v_i)=h(s_{1..i})$ for all $1\leq i \leq \norm{s}$. 
For $2\leq i \leq \norm{s}$ we denote by $l_i,m_i$ the evaluations of variables $l, m$ in the $(i-1)$-th call to the procedure $\nextV$ (for some calls, they may be undefined). Then $W(s)=(\depth(v_1),\ldots,\depth(v_{\norm{s}}))$ is a \textit{supporting walk} of $s$. The walk contains two kind of steps: positive, when $W(s)_i = W(s)_{i-1}+1$, and negative, when $W(s)_i \leq W(s)_{i-1}$. Positive steps occur when procedure $\nextV$ descends into a subtree, i.e.\ evaluates the case from line \ref{alg:positive-step-start} to \ref{alg:positive-step-end}. Negative steps correspond  to the calls in which repeated part of an $x^{1+\epsi}$-block in the partial assignment is erased (lines \ref{alg:negative-step-start} to \ref{alg:negative-step-end}). Let us suppose that the $i$-th step was negative. Note that just from $W(s)$ we can decode the length of the erased block, i.e.\ the value of  $m_i$. This is exactly  $W(s)_{i-1} - W(s)_i+1$. However, to decode the corresponding value $l_i$ we need some additional information. All we know is that $m_i  =\lceil l_i  \cdot \epsi\rceil$, which leaves $\lceil 1/\epsi \rceil$ possible values for $l_i$. Therefore we annotate every step of $W(s)$ with a number from $\set{0, \ldots, \lceil 1/\epsi \rceil -1}$. The number is meaningful only for negative steps. Formally the annotation function $A:[c]^+ \to \{0, \ldots, \lceil 1/\epsi \rceil-1\}^+$ is defined as follows. For $1\leq i \leq \norm{s}$, 
\[
	A(s)_i = \begin{cases} 
			l_i - \lfloor m_i/\epsi \rfloor & \text{if $i$-th step is  negative}   \\ 
			0 & \text{otherwise.} 
		\end{cases}
\]
Let $s\in [c]^+$ and $(f,v)=h(s)$ then $\myPath(s)$ is the sequence of colors  on the  path from $\myRoot(T)$ to $v$ in a partial coloring $f$. Thus, the last value in $\myPath(s)$ is $f(v)$.

Finally we define a total encoding function $\LOG:[c]^+ \to \Nat^+ \times [1/\epsi]^+ \times \NN^+$ as $\LOG(s)= (W(s), A(s), \myPath(s))$.
\begin{claim}
The function $\LOG$ is injective.
\end{claim}
\begin{proof}
Let $s\in [c]^+$. First we show that $\LOG(s)$ uniquely determines sequences $\myPath(s_{1..i})$ for all $1\leq i\leq \norm{s}$. Recall that $\myPath(s)$ is written explicitly in $\LOG(s)$.

Suppose $\myPath(s_{1..i})$ is already known and now we reconstruct $\myPath(s_{1..(i-1)})$. If the $i$-th step of $W(s)$ is positive, i.e.\ $W(s)_i>W(s)_{i-1}$ then the length of the path from the root to the current vertex increased by 1 in step $i$. Thus, $\myPath(s_{1..(i-1)})$ is exactly the same as $\myPath(s_{1..i})$ but with the last color erased. If the $i$-th step of $W(s)$ is negative then $m_i=W(s)_i-W(s)_{i-1}+1$ is the size of the repeated $\epsi$-block and $l_i=\lfloor m_i/\epsi\rfloor + A(s)_i$ is the size of the base  of a $x^{1+\epsi}$ sequence fixed in this step. Clearly, the last color in $\myPath(s_{1..i})$ is introduced in the $i$-th step and $l_i$ colors before form a base of the $x^{1+\epsi}$ sequence that was retracted. Let $(\alpha_1,\ldots,\alpha_{l_i},\beta)$ be the suffix of $\myPath(s_{1..i})$. Then $\myPath(s_{1..(i-1)})$ is just $\myPath(s_{1..i})$ with the last color, namely $\beta$, erased and sequence $(\alpha_1,\ldots,\alpha_m)$ appended.

Once we have sequences $\myPath(s_{1..i})$ for all $1\leq i \leq \norm{s}$, we may simply read  their last values to reconstruct $\chosen(s)$. Now, the previous Claim assures that $\chosen(s)$ uniquely determines $s$.
\end{proof}

Let us fix  $M\in \NN$. We are going to give a bound for the number of distinct $\LOG(s)$ for $s$ of length $M$ based on the structure of $\LOG(s)$. For $s\in [c]^M$, supporting walk $W(s)$ is a sequence of $M$ positive integers with $W(s)_i-W(s)_{i-1}\leq 1$. Now replace all negative steps $W(s)_i, W(s)_{i+1}$ with a sequence $W(s)_i, W(s)_i+1, W(s)_i, W(s)_i -1, W(s)_i -2 , \ldots, W(s)_{i+1}$. It is easy to see that such an operation is reversible and it results in a sequence of positive integers of size at most $2M$ with all steps in $\set{-1,1}$. The number of such sequences is well-known to be $o(2^{2M})$. The number of possible annotation sequences $A(s)$ is bounded by $\lceil 1/\epsi \rceil^M$. Finally,  $\myPath(s)$ is a sequence of colors which appear on some simple path starting from $\myRoot(T)$ in a final partial coloring $h(s)$. There are $\norm{V(T)}$ simple paths starting from $\myRoot(T)$ and each of them has at most $c^\norm{V(T)}$ possible color assignments.

By the last Claim the number of distinct $\LOG(s)$ for $s\in [c]^M$ is simply $c^M$. On the other hand  the upper bounds from obtained just now we get the following inequality
\[
\displaystyle{c^M\leq o\left(4^M\right)\cdot \left\lceil\frac{1}{\epsi}\right\rceil^M\cdot  \left( \norm{V(T)} \cdot c^{\norm{V(T)}}\right)}.
\]
For $c = 4\lceil\frac{1}{\epsi}\rceil$ this gives a contradiction for sufficiently large $M$.
\end{proof}


\begin{proof}[Proof of Theorem \ref{thm:1+epsi}]

Clearly, it suffices to prove the theorem for small values of $\epsi$. We fix any $\epsi \in (0,1)$, and choose $\delta$ so that it satisfies $1+\epsi=\frac{1+\delta}{1-\delta}$ (note that $\delta<\frac{1}{2}$). We are going to prove a bit stronger statement. 
There is a constant $c$ such that for every rooted tree $T$ with maximum degree at most $\Delta$ and lists $\set{L_v}_{v\in V(T)}$ each of size at least $c \Delta^{1+\epsi}$,  there exists a coloring of $T$ from $\set{L_v}$ with
\begin{enumeratenum}
\item no color sequences of the form $x^2$ on simple paths in $T$,\label{item:no-x2} and
\item no color sequences of the form $x^{1+\delta}$ on vertical paths in $T$.\label{item:no-x1+epsi}
\end{enumeratenum}
Let $c$ be sufficiently large integer ($c\geq 12 \cdot (\lceil \frac{1}{\delta} \rceil+1)$ will do). Let $T$ be a tree and $\set{L_v}_{v\in V(T)}$ be a lists assignment with each $\norm{L(v)}= \llen \geq c\Delta^{1+\epsi}$. In order to get a contradiction, suppose that there is no coloring of $T$ chosen from $\set{L_v}$ with \ref{item:no-x2} and \ref{item:no-x1+epsi} satisfied. Fix an arbitrary planar embedding of $T$.

Like in the proof of Lemma \ref{lem:ver-free} we propose a procedure struggling to accomplish an impossible mission that is to produce a coloring of $T$ from $\set{L_v}$ satisfying \ref{item:no-x2} and \ref{item:no-x1+epsi}. The procedure maintains $(f,v)$ a partial coloring of $T$ from $\set{L_v}$ with the only possible violations of \ref{item:no-x2} and \ref{item:no-x1+epsi} on paths starting at $v$. To start the procedure we just pick a color for $\myRoot(T)$ from $L(\myRoot(T))$ and all other vertices are uncolored. Every consecutive step of the procedure tries to correct and/or extend the current partial coloring. This is encapsulated by the call of $\nextT((f,v),n)$ function (see Algorithm \ref{alg-nextT}), where $(f,v)$ is the current partial coloring and $n$ is the hint for the next decision to be made. The call of $\nextT$ checks first whether $(f,v)$ is vertically $x^{1+\delta}$-free. If not then the colors from vertices in the repeated $\delta$-part of $x^{1+\delta}$ occurrence starting from $v$ are erased (as well as colors of all descendants of erased vertices) and the color for the top-most vertex with color cleared is set again to be the $n$-th color from its list. If $(f,v)$ is vertically $x^{1+\delta}$-free then $\nextT$ checks whether it is $x^2$-free (see lines \ref{alg2:x2-start}-\ref{alg2:x2-end} of Algorithm \ref{alg-nextT}). If there is a path $P$ with a color sequence of the form $x^2$ then it must start at $v$ and $\nextT$ clears the colors along $P$ up to the last vertex which is a predecessor of $v$ or up to the vertex which finishes the repeated block of $x^2$ occurence. Again, the color of the top-most vertex with color cleared is set to be the $n$-th color from its list. Finally, if there is no violation of \ref{item:no-x2} and \ref{item:no-x1+epsi} then $\nextT((f,v),n)$ tries to extend the partial coloring $(f,v)$ onto subtrees rooted at consecutive childs of $v$. 
We will keep an invariant that such an extension of an input partial coloring $(f,v)$ can not be done, and if $u$ is the first child of $v$ whose subtree cannot be colored in this way then $\nextT$ sets the color of $u$ to be the $n$-th color from $L(u)$.

\begin{algorithm-hbox}[!ht]
\caption{$\nextT((f,v),n)$}\label{alg-nextT}
\uIf{\textup{$x^{1+\delta}$ occurs in $(f,v)$ starting at $v$ on the way to $\myRoot(T)$}}{
 $l=$ the length of the base of  $x^{1+\delta}$ sequence\label{alg2:x1+epsi-start}\;
 $m  =\lceil l  \cdot \delta\rceil$\label{alg2:m}\;
 $(v_{l+m},\ldots,v_1) =$ the path starting at $v_{l+m}=v$ going upwards in $T$\;\quad with $f(v_i)=f(v_{l+i})$ for $1\leq i\leq m$\;
 $u\gets v_{l+1}$\;
  erase all values of f in $u\desc$\label{alg2:x1+epsi-end}
}
\uElseIf{\textup{$x^2$ occurs in $(f,v)$ starting at $v$ }}{
 $(v_{2l}, \ldots, v_1) =$ the path starting at $v_{2l}=v$\label{alg2:x2-start}\label{alg2:l}\;
 \quad with $f(v_i)=f(v_{l+i})$ for $1\leq i\leq l$ \;
 $k=$ the least integer $i$ such that $v$ is a descendant of $v_i$\label{alg2:k}\;
 \uIf{$k \leq l$} {$u \gets v_{l+1} $}
 \uElse{$u \gets v_{k+1} $}
  erase all values of f in $u\desc$\label{alg2:x2-end}\label{alg2:x2-erase}
}
\uElse{
$u=\firstChild(v)$\label{alg2:positive-start}\;
\While{\textup{$f$ has an extension onto $u\desc$ satisfying \ref{item:no-x2} and \ref{item:no-x1+epsi}}}{extend $f$ onto $u\desc$ and keep \ref{item:no-x2} and \ref{item:no-x1+epsi} satisfied\label{alg2:extend}\;
$u=\nextChild(v,u)$\label{alg2:positive-end}\;
}
}
extend $f$ with $\set{u\ra \alpha}$, where $\alpha$ is the $n$-th element of $L(u)$\label{alg2:set-a-color}\;
\Return $(f,u)$\;
\end{algorithm-hbox}	

The partial function $\nextT:\PVA\times [\llen]\to\PVA$ is defined by Algorithm \ref{alg-nextT}. Note that $\nextT((f,v),n)$ is well-defined for partial colorings $(f,v)$ with
\begin{enumeratei}
\item no color sequence of the form $x^{1+\delta}$ on a vertical path other than paths going upwards from $v$,\label{item2:no-vertical-path} and
\item no color sequence of the form $x^2$ on simple paths other than paths starting at $v$,\label{item2:no-repetition} and
\item no extension of $(f,v)$ onto $v\desc$ preserving \ref{item:no-x2} and \ref{item:no-x1+epsi}.\label{item2:no-extension}
\end{enumeratei}
Moreover, if $\nextT((f,v),n)$ exists then this new partial coloring also satisfies \ref{item2:no-vertical-path}-\ref{item2:no-extension}. This allows us to iterate the calls of $\nextT$.

Now, we define recursively function $h:[\llen]^+\to\PVA$ which captures the idea of our naive procedure trying to color $T$ from $\set{L_v}$. For $s\in [\llen]^+$, $1\leq n\leq c$ and $\alpha$ being the $n$-th color in $L(\myRoot(T))$ put
\begin{align*}
h(n)&=(\set{\myRoot(T) \rightarrow \alpha},\myRoot(T)),\\
h(s\append n)&=\nextT(h(s),n).
\end{align*}
First of all note that $h(s)$ is well-defined for all $s\in [\llen]^+$. Indeed, $h(s)$ is explicitly constructed for all $s$ of length $1$ and it trivially satisfies \ref{item2:no-vertical-path} and \ref{item2:no-repetition}, while \ref{item2:no-extension} holds as we supposed that there is no coloring of $T$ from $\set{L_v}$ satisfying \ref{item:no-x2} and \ref{item:no-x1+epsi}. Now $h(s\append n)$ is well-defined as $\nextT$ is well-defined for partial colorings satisfying \ref{item2:no-vertical-path}-\ref{item2:no-extension} and a new partial coloring also satisfies \ref{item2:no-vertical-path}-\ref{item2:no-extension}.

Now, for given $s\in[c]^+$ we aim to get a concise description of $h(s_{1})$, $h(s_{1..2})$, $h(s_{1..3}),\ldots,h(s)$. Let $(f_i,v_i)=h(s_{1..i})$ for $1\leq i \leq \norm{s}$. We define $\chosen(s)=(f_1(v_1),\ldots,f_{\norm{s}}(v_{\norm{s}}))$. In other words (and exactly as in the proof of Lemma \ref{lem:ver-free}), $\chosen(s)$ is a sequence of colors set by instruction \ref{alg2:set-a-color} of Algorithm \ref{alg-nextT} in consecutive calls of $\nextT$ on a way to build $h(s)$. 
\begin{claim}
The function $\chosen$ is injective.
\end{claim}
\noindent Note that if $(f',u)=\nextT((f,v),n)$ is defined then vertex $u$ is determined only by $(f,v)$, i.e.\ the first argument of $\nextT$, while $f'(u)$ is simply the $n$-th color in $L(u)$. That is why the proof of the claim above follows exactly the same lines as the proof of the corresponding claim in the proof of Lemma \ref{lem:ver-free}. 

For a partial coloring $(f,v)$ let $\myPath((f,v))$ be the color sequence in $f$ on vertices from $\myRoot(T)$ to $v$. In particular, the last color in $\myPath(s)$ is simply $f(v)$.
\begin{claim}
The function $\myPath$ is injective on partial colorings from the image of $h$.
\end{claim}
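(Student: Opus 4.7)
The plan is to prove the claim by induction on $\depth(v)$, which equals the length of $\myPath((f,v))$. For the base case $\depth(v)=1$, we have $v=\myRoot(T)$ and $f$ is defined only at $\myRoot(T)$ with value the unique element of $\myPath$, so $(f,v)$ is completely determined.

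For the inductive step, let $(f_1,v_1),(f_2,v_2)\in h([\llen]^+)$ share a common $\myPath=P$ of length $d>1$. I would first argue that $v_1=v_2$. Suppose not; then without loss of generality $v_2$ precedes $v_1$ in preorder, so $v_2$ lies in a left-sibling subtree of some strict ancestor $a$ of $v_1$. In the trajectory producing $(f_1,v_1)$ this subtree was entirely coloured by a successful extension at line~\ref{alg2:extend}, whereas in the trajectory producing $(f_2,v_2)$ the same subtree ends at the ``stuck'' vertex $v_2$ (witnessing invariant~\ref{item2:no-extension}). Because both trajectories prescribe the same colors on the ancestors of $a$, namely the prefix $P[1..\depth(a)]$, the attempt inside the subtree is driven by identical data in the two cases; invariants \ref{item2:no-vertical-path}--\ref{item2:no-repetition} then force the two behaviours (extension succeeds vs.\ extension fails) to be mutually incompatible, yielding a contradiction.

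Once $v_1=v_2=:v$, the colorings $f_1$ and $f_2$ agree with $P$ along the ancestors of $v$ and at $v$ itself. For every other vertex $w$ in the preorder prefix of $v$, $w$ lies in a fully-coloured left-sibling subtree of some ancestor $a$ of $v$; this subtree was produced by the extension step at line~\ref{alg2:extend} applied to a predecessor partial coloring $h(s_{1..j})$ with current vertex $a$. The $\myPath$ of this predecessor is the prefix $P[1..\depth(a)]$, so the induction hypothesis at depth $\depth(a)<d$ identifies $h(s_{1..j})$ uniquely, and thus determines the extension and, in particular, the colour of $w$. Hence $f_1=f_2$.

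The hard part of the plan is the establishment of $v_1=v_2$. Two vertices of $T$ can a priori share the same root-to-path colour sequence, and the trajectories leading to $(f_1,v_1)$ and $(f_2,v_2)$ may be very different; rigorously ruling out the scenario described above requires combining the ``leftmost-first'' extension logic of Algorithm~\ref{alg-nextT} with all three invariants \ref{item2:no-vertical-path}--\ref{item2:no-extension}, using the $x^2$-freeness on simple paths not starting at the current vertex to prevent two candidate current vertices from coexisting in the image of $h$.
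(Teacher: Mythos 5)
Your overall frame (induction on the length of $\myPath$) matches the paper's, but the step you yourself flag as hard --- establishing $v_1=v_2$ first --- is a genuine gap, and the sketch you give does not close it. The contradiction you invoke compares two different things: in the first trajectory the sibling subtree $S$ was extended on top of whatever partial coloring was present at that moment, while in the second trajectory the stuck-ness invariant asserts non-extendability of $(f_2,v_2)$ onto $v_2\desc$ --- a different base coloring and a different vertex set. More fundamentally, the assertion that ``the attempt inside the subtree is driven by identical data'' is unjustified: whether an extension onto $S$ avoiding $x^2$ exists, and what the seed-driven process does inside $S$, depends on the \emph{entire} partial coloring at that moment (an $x^2$-violating simple path may leave $S$, pass through $a$, and descend into another already-coloured subtree hanging off an ancestor of $a$), not only on the root-to-$a$ colors $P_{1..\depth(a)}$. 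Proving that those whole partial colorings coincide in the two trajectories is essentially the claim itself, so the argument is circular as written. The same issue resurfaces in your second part: to apply the induction hypothesis to ``the predecessor with current vertex $a$'' you must identify which visit to $a$ produced the final colors of the sibling subtree (it may have been erased and recoloured many times), and you must show that the root-to-$a$ colors at that visit are never altered afterwards and hence equal $P_{1..\depth(a)}$; neither is argued.

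The paper never proves $v=v'$ separately; it falls out of the induction. Writing $n$ for the common length of the two $\myPath$'s, it takes $j$ to be the last index after which the current vertex stays at depth at least $n$ forever. The step from $j$ to $j+1$ is then necessarily positive, $\depth(v_j)=n-1$, and $\myPath((f_j,v_j))$ equals the common prefix of length $n-1$ because all later modifications happen inside $v_{j+1}\desc$. The induction hypothesis applied to $(f_j,v_j)$ and $(f'_{j'},v'_{j'})$ gives equality of the \emph{entire} partial colorings, and determinism of $\nextT$ then forces $v_{j+1}=v'_{j'+1}=w$ (whence $v=v'=w$) and agreement everywhere outside $w\desc$; the colour of $w$ is read off the last entry of $\myPath$. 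To salvage your plan you would need exactly this kind of ``last crossing'' bookkeeping to get full partial colorings, not just root-path colours, to agree before comparing behaviours inside sibling subtrees --- at which point you have reproduced the paper's argument.
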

\begin{proof}

We are going to prove that for any two partial colorings $(f,v)$, $(f',v')$ from the image of $h$, if $\myPath((f,v))=\myPath((f',v'))$ then  $(f,v)=(f',v')$. The proof goes by induction on the length of $\myPath((f,v))$.

When the length of $\myPath((f,v))$ and so $\myPath((f',v'))$ is $1$ then $v=v'=\myRoot(T)$. Thus, $\myRoot(T)$ is the only vertex colored by $f$ and $f'$, and the statement is trivial.

Suppose that $\norm{\myPath((f,v))}=\norm{\myPath((f',v'))}=n$ and the claim holds for all shorter sequences. Since $(f,v)$ and $(f',v')$ are in the image of $h$ there exist $s,s'\in [\llen]^+$ such that $h(s)=(f,v)$ and $h(s')=(f',v')$. Let $(f_i,v_i)=h(s_{1\ldots i})$ for $1\leq i\leq \norm{s}$ and $(f'_i,v'_i)=h(s'_{1\ldots i})$ for $1\leq i\leq \norm{s'}$. Let $j$ be the least index such that $\depth(v_i)\geq n$ for $j< i\leq \norm{s}$. Analogously, let $j'$ be the least index such that $\depth(v'_i)\geq n$ for $j'< i\leq \norm{s'}$. Now, we need a basic property of Algorithm \ref{alg-nextT} that is, if $(g',u')=\nextT(g,u)$ then the coloring of a path from $\myRoot(T)$ to $u'$, with excluded $u'$, is the same in $g$ and $g'$. This implies that the color sequence from $\myRoot(T)$ to $v_j$ is the same in partial colorings $h(s_{1\ldots i})$ for all $j\leq i \leq \norm{s}$, which is just the prefix of $\myPath((f,v))$ of length $n-1$. Analogously, a color sequence from $\myRoot(T)$ to $v'_{j'}$ is the same in partial colorings $h(s'_{1\ldots i})$ for all $j'\leq i \leq \norm{s'}$, which is just the prefix of $\myPath((f',v'))=\myPath((f,v))$ of length $n-1$. In particular this means that $\myPath((f_j,v_j))=\myPath((f'_{j'},v'_{j'}))$. By the induction hypothesis we get $(f_j,v_j)=(f'_{j'},v'_{j'})$. Now we do know that partial colorings $(f_{j+1},v_{j+1})$ and $(f'_{j'+1},v'_{j'+1})$ are generated by the calls of $\nextT$ with the same first arguments.
Note that Algorithm \ref{alg-nextT} is deterministic (in particular line \ref{alg2:extend}) in a sense that for the same input it always generates the same output. 
Thus, we immediately get that $v_{j+1}=v'_{j'+1}$, say $w=v_{j+1}$, and two partial colorings $(f_{j+1},v_{j+1})$, $(f'_{j+1},v'_{j+1})$ differ at most with the color of $w$. By the definition of $j$ and $j'$, in all the consecutively built partial colorings $(f_i,v_i)$ for $j< i\leq \norm{s}$, $(f'_i,v'_i)$ for $j'<i\leq \norm{s'}$ vertex $w$ is on the path from $\myRoot(T)$ to the current vertex, i.e. $v_i$ or $v'_i$, respectively. Moreover, all these partial colorings differ at most in the subtree of $w$. But the only vertex from $w\desc$ colored in the final colorings (i.e. $(f,v)$ and $(f',v')$) is $w$ itself. Finally, in both of these colorings a vertex $w$ receives the same color which is at the end of $\myPath((f,v))=\myPath((f',v'))$. Thus, $(f,v)=(f',v')$.
\end{proof}

Again (as in the proof of Lemma \ref{lem:ver-free}) we aim to get a concise description of all these partial colorings and then apply a double counting argument. For $s\in [\llen]^+$, let $(f_i,v_i)=h(s_{1\ldots i})$ for all $1\leq i \leq \norm{s}$. For $2\leq i \leq \norm{s}$ we denote by $l_i,k_i$ the valuations of variables $l, k$ in the $(i-1)$-th call to the procedure $\nextT$ (for some calls, they may be undefined). Define $W(s)=(\depth(v_1),\ldots,\depth(v_{\norm{s}}))$ to be a supporting walk of $s$. We distinguish three kind of steps (differences) in $W(s)$
\begin{enumeratealph} 
\item positive, when $W(s)_i=W(s)_{i-1}+1$, i.e.\ no obstruction occures in the $i$-th step and Algorithm \ref{alg-nextT} evaluates lines \ref{alg2:positive-start}-\ref{alg2:positive-end},
\item $x^{1+\delta}$-negative, when $W(s)_i \leq W(s)_{i-1}$ and color sequence of the form $x^{1+\delta}$ is fixed in the $i$-th step; this corresponds to the evaluation of lines \ref{alg2:x1+epsi-start}-\ref{alg2:x1+epsi-end},
\item $x^2$-negative, when $W(s)_i \leq W(s)_{i-1}$ and color sequence of the form $x^2$ is fixed in $i$-th step; this corresponds to the evaluation of lines \ref{alg2:x2-start}-\ref{alg2:x2-end}.
\end{enumeratealph}	
Additionally we put $m_i=W(s)_i-W(s)_{i-1}+1$. For  $x^{1+\delta}$-negative steps, $m_i$ corresponds to the value of variable $m$ in the corresponding call to the procedure $\nextT$.

This time we need three kinds of annotations enriching the information given in $W(s)$. The first is analogous to the one in the proof of Lemma \ref{lem:ver-free} and helps to recover lengths of the base of the $x^{1+\delta}$ sequence  in $x^{1+\delta}$-negative steps. Suppose that the $i$-th step was $x^{1+\delta}$-negative. Note that just from $W(s)$ we can decode the length of the repeated block, i.e.\ the value of variable $m_i$. However, to decode a corresponding value $l_i$ we need some additional information. All we know is that $m_i  =\lceil l_i  \cdot \delta\rceil$, which leaves $\lceil 1/\delta \rceil$ possible values for $l_i$. Therefore we annotate every negative step with a number from $\set{0, \ldots, \lceil 1/\delta \rceil -1}$ and use an extra value for all steps which are not $x^{1+\delta}$-negative. The annotation function $A:[\llen]^+ \to \{-1,0, \ldots, \lceil 1/\delta \rceil-1\}^+$ is defined as follows. For $1\leq i \leq \norm{s}$, 
\[
	A(s)_i = \begin{cases} 
			l_i - \lfloor m_i/\delta \rfloor & \text{if $i$-th step is  $x^{1+\delta}$-negative}   \\ 
			-1 & \text{otherwise.} 
		\end{cases}
\]

The second annotation function will serve to recover basic information concerning the paths whose part was retracted in $x^2$-negative step. Suppose that the $i$-th step is $x^2$-negative. We want to recover the values of $l_i$ and $k_i$  set in lines \ref{alg2:l} and \ref{alg2:k}, which represents the half of length of the path forming a repetition and the position of the tip in this path. Note that $m_i=W(s)_{i-1} - W(s)_i+1$ is equal to $\min(l_i,2l_i - k_i)$. Hence, we need  information what is the difference between $l_i$ and $k_i$. For $1\leq i \leq \norm{s}$ let
\[
	B(s)_i = \begin{cases} 
			l_i-k_i  & \text{if $i$-th step is $x^2$-negative} \\
			\text{whatever} & \text{otherwise.} 
		\end{cases}
\]
To get a more convenient description of function $B$, we make a list of important values of function $B$ and encode it into a sequence over $\set{-1,0,1}$. If the $i$-th step is $x^2$-negative then we convert $B(s)_i$ into a sequence of $0$'s of length $m_i=W(s)_i - W(s)_{i-1}+1$ and if $B(s)_i\neq 0$ we put $\signum(B(s)_i)$ in $\norm{B(s)_i}$-th position. We need to argue here that $\norm{B(s)_i}\leq m_i$. Indeed, as the partial coloring in the $i$-th step has no $x^{1+\delta}$ occurrence we get that $\norm{l_i-k_i}\leq \delta l_i$ and $l_i-m_i\leq \delta l_i$, which give
\[
\norm{B(s)_i}=\norm{l_i-k_i}\leq \delta l_i\leq \frac{\delta}{1-\delta}m_i\leq m_i.
\]
The last inequality holds as $\delta<\frac{1}{2}$. We define $\Bs(s)$ to be the concatenation of the sequences produced for all $x^2$-negative steps.

The third annotation contains the further description of the paths involved in $x^2$-negative steps. Suppose that the $i$-th step is $x^2$-negative and let $P=(v_{2l_i},\ldots,v_1)$ be the path whose color sequence forms a repetition. Already from $W(s)$ and $B^*(s)$ we will recover the size of the path and the value of $k_i$ such that $v_{k_i}$ is the tip of $P$. Now, we want to describe the way in which $P$ goes down in $T$ from $v_{k_i}$ up to $v_1$. Let $n_j$ for $1<j\leq k-1$ be the position of $v_{j-1}$ on the list of children of vertex $v_j$. Then put $C(i)=(n_1,\ldots,n_{k_i-1})$ and $\Cs(s)$ be the concatenation of $C(i)$'s for $i$ being the indices of $x^2$-negative steps.

A total encoding function is defined as $\LOG(s)= (W(s), A(s),\Bs(s),\Cs(s), h(s))$ for $s\in[\llen]^+$. Length of a $\LOG(s)= (W(s), A(s),\Bs(s),\Cs(s), h(s))$ is defined to be the length of $W(S)$, hence $\norm{\LOG(s)}= \norm{s}$. 
 Here comes the key property of $\LOG$ function.
\begin{claim}
The function $\LOG$ is injective.
\end{claim}
\begin{proof}
Take any $L$ from the image of $\LOG$. Suppose that $\norm{L}= n$. Then, there exists $s\in [\llen]^n$ such that $\LOG(s)=L$.  We are going to show that there is only one such $s$. We prove that reconstructing the sequence $\chosen(s)$ from $L$. This will prove the claim as we already know that $\chosen(s)$ is injective.

Let $s'$ be the prefix of $s$ of size $n-1$. In one step of reconstruction we decode from $L$ the last chosen color $\alpha$ and the value of $\LOG(s')$. Then, by simple iteration of this process, we reconstruct the whole $\chosen(s)$. The value of $\alpha$ may be simply read from $h(s)$, which is explicitly given in $\LOG(s)$. In order to get $\LOG(s')$ note that $W(s')$ and $A(s')$ are just the prefixes of $W(s)$ and $A(s)$ of length $\norm{s}-1$. It remains to reconstruct $h(s')$, $\Bs(s')$ and $\Cs(s')$. The way we proceed depends on the type of the last step in $W(s)$, which can be recognized from $W(s)$ itself and $A(s)$. Indeed, if $W(s)_{n}=W(s)_{n-1}+1$, then the last step is positive. Otherwise the value of $A(s)$ indicates which type of negative step we deal with.

\smallskip

\noindent\textbf{Cases 1 and 2.} The last step in $W(s)$ is positive or $x^{1+\delta}$-negative. Then $\Bs(s')=\Bs(s)$, $\Cs(s')=\Cs(s)$. The partial coloring $h(s')$ is reconstructed exactly as in the analogous cases in the proof of Lemma \ref{lem:ver-free}.

\smallskip

\noindent\textbf{Case 3.} The last step in $W(s)$ is $x^2$-negative. Let $P=(v_{2l_n},\ldots,v_1)$ be the path whose color sequence forms a repetition and let $v_{k_n}$ be the tip of $P$. The number of vertices in $P$ with colors erased can be read from $W(s)$ and it is $m_n=W(s)_{n}-W(s)_{n-1}+1$. By the construction of the Algorithm \ref{alg-nextT}  (lines \ref{alg2:x2-start}-\ref{alg2:x2-end}) we have
\[
2l_n-m_n=\max(l_n,k_n).
\]

From the last $m_n$ values of sequence $\Bs(s)$ we can extract the value of $l_n-k_n$. If all these values are zeros then $l_n-k_n=0$. Otherwise exactly one of these $m_n$ values is equal to $1$ or $-1$ and the position of this non-zero value determines $\norm{l_n-k_n}$ while the sign of $l_n-k_n$ is the same as the sign of this non-zero entry. Once we know $d=l_n-k_n$ we can deduce that
\begin{align*}
&l_n=m_n \text{ and } k_n=m_n-d,&&\text{if $d=l_n-k_n\geq0$},\\
&l_n=m_n-d \text{ and } k_n=m_n-2d,&&\text{if $d=l_n-k_n<0$}.
\end{align*}

Let $h(s)=(f,u)$, $h(s')=(f',u')$. As we supposed that call of $\nextV$ generating $h(s)$ from $h(s')$ retracts a repetition on path $P$, we get that $u'=v_{2l_n}$ and $u=v_{2l_n-m_n+1}$. The color of $u=v_{2l_n-m_n+1}$ in $f'$ was erased by line \ref{alg2:x2-erase} and replaced in line \ref{alg2:set-a-color} of Algorithm \ref{alg-nextT}. The colors of $v_{2l_n-m_n+1},\ldots,v_{2l_n}$ were erased from $f'$ and are not visible in $f$ but the colors of $v_1,\ldots,v_{2l_n-m_n}$ remain unchanged. The vertex $v_{2l_n-m_n}$ is clearly the parent of $u$. As we already reconstructed the value of $k_n$, i.e.\ the position of the tip of $P$, we know the vertices of $P$ lying on a path from $v_{2l_n-m_n}$ to $\myRoot(T)$. In particular, we reconstructed the vertex $v_{k_n}$ in $T$. Now, we make use of $\Cs(s)$. The last $k_n-1$ values of $\Cs(s)$ indicates how the path $P$ goes down in $T$ from $v_{k_n}$ up tp $v_1$. This way we reconstructed the position of $(v_{2l_n-m_n},\ldots,v_1)$ in $T$ and we know that their colors are the same in $f$ and $f'$. Once we know the colors of at least first half of the vertices of $P$ (as $m_n\leq l_n$) and as the color sequence of vertices from $P$ forms a repetition in $f'$ we may deduce the colors of $v_{2l_n-m_n+1},\ldots,v_{2l_n}$.

Putting all together we finally reconstruct $\myPath(h(s'))$ which is the sequence of colors in $f'$ from $\myRoot(T)$ down to $u'=v_{2l_n}$. Indeed, the colors from $\myRoot(T)$ down to $v_{2l_n-m_n}$ are the same in $f'$ and $f$, while the colors from $v_{2l_n-m_n+1}$ to $v_{2l_n}$ has just been reconstructed. Now, recall that the function $\myPath$ is injective on the partial colorings from the image of $h$ which means that we can reconstruct from $\myPath(h(s'))$ a partial coloring $h(s')$ itself.
\end{proof}

We are going to bound the number of distinct $\LOG(s)$ for $s$ of length $M$.
For every $s\in [\llen]^M$ we have $\LOG(s)=(W(s),A(s),\Bs(s),\Cs(s),h(s))$. Just like before, the number of integer walks $W(s)$ of length $M$ is $o(4^{M})$. The number of possible annotation sequences $A(s)$ is bounded by $(\lceil 1/\delta \rceil+1)^M$. The annotation $\Bs(s)$ is a sequence of numbers $\set{-1,0,1}$ of length $\sum_i m_i$, where $i$ goes over all the indices of $x^{1+\delta}$-negative steps. Clearly, $\sum_i m_i \leq M$ and so the number of distinct $\Bs(s)$ is bounded by $3^M$. The annotation $\Cs(s)$ is the concatenation of sequences over $\set{1,\ldots,\Delta-1}$. The length of $\Cs(s)$ is equal to $\sum_i k_i$, where the sum goes over the set $I_{x^2}$ of all the indices of $x^2$-negative steps. Clearly,
\[
\sum_{i\in I_{x^2}} k_i \leq  \sum_{i\in I_{x^2}} (1+\delta) l_i\leq \sum_{i=1}^M \frac{1+\delta}{1-\delta}m_i\leq \frac{1+\delta}{1-\delta}M=(1+\epsi)M, 
\]
By the last Claim the number of distinct $\LOG(s)$ for $s\in [\llen]^M$ is simply $\llen^M \geq (c\Delta)^{(1+\epsi)M}$. On the other hand we just obtained an independent upper bound and altogether we get the following inequality
\[
\displaystyle{ (c\Delta)^{(1+\epsi)M} \leq o\left(4^M\right)\cdot \left(\left\lceil\frac{1}{\delta}\right\rceil+1\right)^M \cdot 3^M \cdot \Delta^{(1+\epsi)M} \cdot \left(\norm{V(T)} c^{\norm{V(T)}}\right)}.
\]
For $c \geq 12 \cdot (\lceil \frac{1}{\delta} \rceil+1)$ and sufficiently large $M$ we get a contradiction.
\end{proof}

\bibliographystyle{plain}
\bibliography{nonrepetitive-trees}
\end{document}